\newcommand\Theorem{Theorem}
\newcommand\Figure{Figure}
\newcommand\Section{Section}
\newcommand\Sections{Sections}
\newcommand\Corollary{Corollary}
\newtheorem{theorem}{Theorem}[section]
\newtheorem{proposition}[theorem]{Proposition}
\newtheorem{lemma}[theorem]{Lemma}
\newtheorem{corollary}[theorem]{Corollary}
\newcommand{\barany}{B\'{a}r\'{a}ny}
\newcommand{\blagojevic}{Blagojevi\'{c}}
\newcommand{\cech}{\v{C}ech}
\newcommand{\matousek}{Matou\v{s}ek}
\newcommand{\szucs}{Sz{\H{u}}cs}
\newcommand{\vrecica}{Vre\'{c}ica}
\newcommand*{\longhookrightarrow}{\ensuremath{\lhook\joinrel\relbar\joinrel\rightarrow}}
\newcommand*{\longlongrightarrow}{\ensuremath{\relbar\joinrel\relbar\joinrel\rightarrow}}
\newcommand{\RR}{\mathbb{R}} 
\newcommand{\ZZ}{\mathbb{Z}} 
\newcommand{\FF}{\mathbb{F}} 
\newcommand{\wo}{\backslash} 
\newcommand{\To}{\longrightarrow} 
\newcommand{\toG}[1]{\longrightarrow_{#1}} 
\newcommand{\tof}[1]{\stackrel{#1}{\longrightarrow}} 
\newcommand{\ltof}[1]{\stackrel{#1}{\longlongrightarrow}} 
\newcommand{\iso}{\cong} 
\newcommand{\bd}{\partial} 
\newcommand{\ind}{\textnormal{Ind}} 
\newcommand{\pt}{\textnormal{pt}} 
\newcommand{\incl}{\hookrightarrow} 
\newcommand{\lincl}{\longhookrightarrow}
\newcommand{\st}{~:~} 
\newcommand{\simplex}{\Delta} 
\newcommand{\vertices}{\textnormal{vert}} 
\newcommand{\cohdim}{\textnormal{cohdim}} 
\newcommand{\wt}{\widetilde}
\begin{document}

\title {A tight colored Tverberg theorem for maps to manifolds\setcounter{footnote}{1}%
\footnote{Topology and its Applications 158(12), 2011, 1445-1452.}}

\author{%
\setcounter{footnote}{0}
Pavle V. M. Blagojevi\'{c}\thanks{%
The research leading to these results has received funding from the European Research
Council under the European Union's Seventh Framework Programme (FP7/2007-2013) /
ERC Grant agreement no.~247029-SDModels. Also supported by the grant ON 174008 of the Serbian
Ministry of Science and Environment.} \\
\small Mathemati\v cki Institut\\
 SANU\\
\small Knez Michailova 36\\
\small 11001 Beograd, Serbia\\
\small \url{pavleb@mi.sanu.ac.rs} \and \setcounter{footnote}{0}
Benjamin Matschke$^{*}$%
\thanks{$^{*}$Supported by Deutsche Telekom Stiftung.}\\
\small Institute of Mathematics\\
\small FU Berlin\\
\small Arnimallee 2\\
\small 14195 Berlin, Germany\\
\small \url{matschke@math.fu-berlin.de} \and \setcounter{footnote}{0}
G\"unter M. Ziegler$^{**}$%
\thanks{$^{**}$%
The research leading to these results has received funding from the European Research
Council under the European Union's Seventh Framework Programme (FP7/2007-2013) /
ERC Grant agreement no.~247029-SDModels.} \\
\small Institute of Mathematics\\
\small FU Berlin\\
\small Arnimallee 2\\
\small 14195 Berlin, Germany\\
\small \url{ziegler@math.fu-berlin.de}}
\date{April 15, 2011}

\maketitle

\begin{abstract}\noindent

We prove that any continuous map of an $N$-dimensional simplex $\simplex_N$ with colored vertices to a $d$-dimensional manifold~$M$ must map $r$ points from disjoint rainbow faces of $\simplex_N$ to the same point in~$M$:
For this we have to assume that $N\geq (r-1)(d+1)$, no $r$ vertices of $\simplex_N$ get the same color, and our proof needs that $r$ is a prime.
A face of $\simplex_N$ is a \emph{rainbow face} if all vertices have different colors.

This result is an extension of our recent ``new colored Tverberg theorem'',
the special case of $M=\RR^d$.
It is also a generalization of Volovikov's 1996 topological
Tverberg theorem for maps to manifolds, which arises when all color classes have size~$1$ (i.e., without color constraints);
for this special case Volovikov's proof, as well as ours, work when $r$ is a prime power.
\end{abstract}

\section{Introduction}

Recently, we formulated a new version of the 1992
``colored Tverberg conjecture'' by \barany\ and Larman
\cite{BL92}, and proved this new version in the case of primes.

\begin{theorem}[{Tight colored Tverberg theorem} \cite{BMZ09a}]
\label{thmMainForRd}
For $d\ge 1$ and a prime $r\ge 2$, set $N:=(d+1)(r-1)$,
and let the $N+1$ vertices of an $N$-dimensional simplex $\simplex_N$
be colored such that all color classes are of size at most $r-1$.

Then for every continuous map $f:\simplex_N\rightarrow \RR^d$,
there are $r$ disjoint faces $F_1,\dots,F_r$ of $\simplex_N$
such that the vertices of each face $F_i$ have all different colors,
and such that the images under $f$ have a point in common:
$f(F_1)\cap\dots\cap f(F_r)\neq\emptyset$.
\end{theorem}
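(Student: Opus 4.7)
The plan is to apply the configuration space / test map (CS/TM) paradigm. Write the color classes as $C_0, \ldots, C_m$ with $c_i := |C_i| \leq r-1$ and $\sum_i c_i = N+1$. The $r$-tuples of pairwise disjoint rainbow faces of $\simplex_N$ form a subcomplex of the $r$-fold $2$-wise deleted join of $\simplex_N$, and because the rainbow condition restricts each face to at most one vertex per color class, this configuration space decomposes as a join of chessboard complexes:
\[
K \ \cong \ \Delta_{c_0,r} \ast \Delta_{c_1,r} \ast \cdots \ast \Delta_{c_m,r},
\]
where $\Delta_{c,r}$ denotes the chessboard complex of a $c\times r$ board. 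The cyclic group $G := \ZZ/r$ acts on $K$ by permuting the $r$ board columns, freely because $r$ is prime.

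Next I would extend $f$ to the equivariant join $f^{\ast r}\colon (\simplex_N)^{\ast r} \to (\RR^d)^{\ast r}$ and restrict to $K$. A rainbow Tverberg coincidence $f(F_1) \cap \cdots \cap f(F_r) \neq \emptyset$ is exactly the condition that $f^{\ast r}|_K$ hits the thin diagonal of $(\RR^d)^{\ast r}$. If the theorem fails, then $f^{\ast r}|_K$ avoids the diagonal and a $G$-equivariant deformation retraction onto the unit sphere of the ``test representation'' yields a $G$-equivariant map
\[
K \ \longrightarrow \ S(W_r^{d+1}),
\]
where $W_r = \{x \in \RR^r : \sum x_i = 0\}$ carries the standard $G$-action and the target is the sphere of the $(r-1)(d+1) = N$-dimensional $G$-representation $W_r^{d+1}$, of topological dimension $N-1$.

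The final and hardest step is to rule out such an equivariant map. The essential input, available precisely because $r$ is prime, is that the chessboard complex $\Delta_{c,r}$ is a free $G$-space with $\ind_G(\Delta_{c,r}) = c-1$ for every $c \leq r$; for the maximal case $c = r-1$ this rests on $\Delta_{r-1,r}$ being an orientable $G$-pseudomanifold with nonzero mod-$r$ equivariant top class. Combined with the standard join bound $\ind_G(A \ast B) \geq \ind_G A + \ind_G B + 1$, iterated over the $m+1$ join factors, one obtains
\[
\ind_G(K) \ \geq \ \sum_{i=0}^m (c_i - 1) \ + \ m \ = \ N,
\]
which is strictly larger than $\dim S(W_r^{d+1}) = N-1$, contradicting the existence of the equivariant map and proving the theorem. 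The main obstacle is precisely this index lower bound $\ind_{\ZZ/r}(\Delta_{c,r}) \geq c-1$: it is the algebraic-topological core of the argument, carried out via an equivariant cohomology calculation over $\FF_r$ that exploits the polynomial structure of $H^\ast(BG;\FF_r)$ for prime $r$.
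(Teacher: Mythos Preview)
Your overall strategy---CS/TM with configuration space $K=\Delta_{c_0,r}*\cdots*\Delta_{c_m,r}$, test map into $S(W_r^{d+1})\cong S^{N-1}$, and obstruction via an equivariant index---is exactly the paper's. The paper uses the ideal-valued Fadell--Husseini index, cites the computation $\ind_{\ZZ_r}(K)=H^{*\ge N+1}(B\ZZ_r)$ from \cite{BMZ09b}, and compares with $\ind_{\ZZ_r}(S(W_r^{d+1}))=H^{*\ge N}(B\ZZ_r)$ to get the contradiction.

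The gap is in how you obtain the index of $K$. The additive join inequality $\ind_G(A*B)\ge\ind_G A+\ind_G B+1$ you call ``standard'' is standard for the connectivity-based co-index (via $S^a*S^b\cong S^{a+b+1}$), but for \emph{that} index the input $\ind_G(\Delta_{c,r})\ge c-1$ fails: the chessboard complex $\Delta_{c,r}$ is only $(\nu-2)$-connected with $\nu=\min\bigl(c,r,\lfloor(c+r+1)/3\rfloor\bigr)$, which for $c>(r+1)/2$ is strictly below $c$, so there is no reason for an equivariant map $S^{c-1}\to\Delta_{c,r}$ to exist. Your pseudomanifold/top-class argument for $\Delta_{r-1,r}$ does give the right bound, but for the \emph{cohomological} (Fadell--Husseini) index, and for that index the join formula runs the other way: one only has $\ind_G(X*Y)\supseteq\ind_G(X)\cdot\ind_G(Y)$ as ideals, which does not by itself exclude low-degree elements from $\ind_G(K)$. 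So you are mixing two index theories, using the join additivity of one and the factor-wise lower bound of the other. In \cite{BMZ09b} the index of the full join $K$ is computed directly, via the Serre spectral sequence of its Borel construction; the pseudomanifold top class of the $\Delta_{r-1,r}$ factors is indeed the key input, but it enters through that spectral-sequence analysis of $K$ as a whole, not through a factor-by-factor additive formula. You have identified the right ingredients and the right endpoint; what is missing is the actual mechanism that carries the top-class information through the join.
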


Here a \emph{coloring} of the vertices of the simplex $\simplex_N$ is a partition
of the vertex set into color classes, $C_1\uplus\dots\uplus C_m$.
The condition $|C_i|\le r-1$ implies that there are at least $d+2$
different color classes.
In the following, a face all whose vertices have different colors,
$|F_j\cap C_i|\le 1$ for all~$i$, will be called a \emph{rainbow face}.

Theorem~\ref{thmMainForRd} is tight in the sense that it fails
for maps of a simplex of smaller dimension, or if some $r$
vertices of the simplex have the same color.
It implies an optimal result for the
\barany--Larman conjecture in the case where $r+1$
is a prime, and an asymptotically-optimal bound in general; see~\cite[Corollary 2.4 and 2.5]{BMZ09a}.
The special case where all vertices of $\simplex_N$ have different
colors, $|C_i|=1$, is the topological Tverberg theorem of
\barany, Shlosman \& \szucs~\cite{BSS81}.

\smallskip

In this paper we present an extension of Theorem~\ref{thmMainForRd}
that treats continuous maps $\simplex_N\to M$ from the $N$-simplex to an
arbitrary $d$-dimensional manifold $M$ in place of~$\RR^d$.

\begin{theorem}[{Tight colored Tverberg theorem for} $M$]
\label{thmMain}
For $d\ge 1$ and a prime $r\ge 2$, set $N:=(d+1)(r-1)$,
and let the $N+1$ vertices of an $N$-dimensional simplex $\simplex_N$
be colored such that all color classes are of size at most $r-1$.

Then for every continuous map $f:\simplex_N\rightarrow M$
to a $d$-dimensional manifold $M,$ the simplex $\simplex_N$ has $r$ disjoint rainbow faces
whose images under $f$ have a point in common.
\end{theorem}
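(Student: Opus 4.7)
The main idea is to employ the equivariant configuration space/test map scheme used to prove Theorem~\ref{thmMainForRd}, combining the colored configuration space of \cite{BMZ09a} with Volovikov's cohomological index computation for test targets built from a $d$-manifold.

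First I would reuse the colored configuration space from \cite{BMZ09a}: let $K \subseteq \simplex_N^{*r}$ be the $\ZZ/r$-invariant subspace of the $r$-fold join consisting of formal convex combinations $t_1 x_1 \oplus \dots \oplus t_r x_r$ whose supports $F_1,\dots,F_r$ are $r$ pairwise disjoint rainbow faces of $\simplex_N$. The cyclic group $\ZZ/r$ acts freely on $K$ by permuting the join coordinates, and the proof of Theorem~\ref{thmMainForRd} provides a lower bound on the Fadell--Husseini index $\ind_{\ZZ/r}(K)$, itself a consequence of a connectivity estimate for the underlying rainbow chessboard complex.

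Next, $f$ induces an equivariant join map $f^{*r} : K \to M^{*r}$, and a rainbow Tverberg $r$-partition for $f$ is exactly a point of $K$ that is sent to the thin diagonal $\Delta_M \subset M^{*r}$ (all $r$ weights equal to $1/r$, all $r$ points coincident). Under the assumption that no such $r$-partition exists, we obtain a $\ZZ/r$-equivariant map $K \to M^{*r}\setminus \Delta_M$, so the theorem reduces to ruling out such an equivariant map by comparing the $\ZZ/r$-indices of the two sides. To bound $\ind_{\ZZ/r}(M^{*r}\setminus \Delta_M)$ I would follow Volovikov's strategy: set up a Leray--Serre spectral sequence for a suitable fibration of $M^{*r}\setminus \Delta_M$ over $M$ whose fiber is equivariantly equivalent to the Euclidean complement $(\RR^d)^{*r}\setminus \Delta_{\RR^d}$, whose $\ZZ/r$-index is classical, and then show that the twists coming from the orientation bundle of $M$ do not enlarge this index beyond the Euclidean value.

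The main obstacle is precisely this last spectral sequence computation: one must handle possibly non-orientable $M$, non-trivial local coefficients, and potential higher differentials, and combine them into an upper bound on $\ind_{\ZZ/r}(M^{*r}\setminus \Delta_M)$ matching the Euclidean case. This is essentially the content of Volovikov's 1996 manifold Tverberg theorem. The work specific to the present paper is to verify that the colored configuration space $K$ carries an index bound at least as large as the uncolored deleted join does in Volovikov's setting, so that his obstruction still applies; this I expect to follow by transferring the connectivity/index estimates from \cite{BMZ09a} essentially verbatim, the colored lower bound for $\ind_{\ZZ/r}(K)$ being strictly greater than the Volovikov-type upper bound for $\ind_{\ZZ/r}(M^{*r}\setminus \Delta_M)$.
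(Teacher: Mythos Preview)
Your overall configuration space/test map setup matches the paper: the colored deleted join $K$ is a join of chessboard complexes, and the paper quotes \cite{BMZ09b} for $\ind_{\ZZ_r}(K)=H^{*\ge N+1}(B\ZZ_r)$, which is exactly the lower bound you want on the domain side. The reduction of the theorem to finding a nonzero element of $\ind_{\ZZ_r}(M^{*r}_{\Delta(r)})$ in degree at most $N$ is also correct.

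The gap is in your treatment of the target. There is no fibration $M^{*r}\setminus\Delta_M\to M$ with fiber $(\RR^d)^{*r}\setminus\Delta_{\RR^d}$: the dimensions already do not match (the total space has dimension $rd+r-1$, so a fiber over a $d$-dimensional base would have dimension $N=(r-1)(d+1)$, whereas your proposed fiber has dimension $rd+r-1$), and more basically $M^{*r}$ carries no natural $\ZZ_r$-equivariant projection to $M$. What \emph{does} fiber over $M$ is a tubular neighborhood of the thin diagonal $T_{M^{*r}}\cong M$ inside $M^{*r}$; its boundary is a sphere bundle $S(\xi)\to M$ with fiber the representation sphere $S(F)\cong S^{N-1}$. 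The paper relates $H^{N-1}(M^{*r}_{\Delta(r)})$ to $H^{N-1}(S(\xi))$ via Mayer--Vietoris for the triple $(D(\xi),M^{*r}_{\Delta(r)},M^{*r})$, and then compares the Borel spectral sequences of $S(\xi)$ and of a single fiber $S(F)$ to produce the required nonzero index element in degree $N$.

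For that Mayer--Vietoris step to give surjectivity one needs $H^N(M^{*r})=0$ and $H^{N-1}(M)=0$, both of which can fail for a general $d$-manifold (e.g.\ $M=S^d$). The paper handles this with a geometric reduction lemma that is entirely absent from your outline: one first proves the theorem for an auxiliary map $f':\simplex_{N'}\to M\times I$, obtained by adjoining $r-1$ new vertices in a fresh color and sending the extra barycentric mass to the $I$-coordinate, and then reads off a colored Tverberg partition for $f$ from one for $f'$. Iterating $g=1+\lfloor d/(r-1)\rfloor$ times replaces $M$ by $M\times I^g$, which satisfies $(r-1)\dim(M)>r\cdot\cohdim(M)$, and this inequality is exactly what makes the vanishing $H^N(M^{*r})=0$ hold. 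This reduction is the main new idea in the paper beyond \cite{BMZ09a} and \cite{Vol96}; without it your proposed target-side computation does not close.
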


\smallskip

\Theorem~\ref{thmMain} without color constraints (that is,
when all color classes are of size~$1$, and thus all faces are rainbow faces)
was previously obtained by Volovikov \cite{Vol96}, using different methods.
His proof (as well as ours in the case without color constraints) works
for prime powers~$r$; see \Section~\ref{secUncoloredCase}.

The prime power case for the colored version,
\Theorem~\ref{thmMain}, seems however out of reach at this point, even in
the case $M=\RR^d$.
Similarly, there currently does not seem to be
a viable approach to the case without color constraints,
even for $M=\RR^d$, when $r$ is not a prime power.
This is the remaining open case of the topological Tverberg conjecture \cite{BSS81}.

The conclusion of Theorem~\ref{thmMain} remains
valid if we only consider a continuous map $f: R\rightarrow M$,
where $R=C_1*\ldots*C_m$ denotes the subcomplex of rainbow faces in~$\simplex_N$.
This is non-trivial in general.
See the discussion in \Section~\ref{secProofOfThmWithSmallDomain}.

\section{Proof}

We prove \Theorem~\ref{thmMain} in two steps:
\begin{compactitem}[$\bullet$]
\item First, a geometric reduction lemma implies that it suffices to consider only manifolds $M$ that are of the form $M=\wt M\times I^g$, where $I=[0,1]$ and $\wt M$ is another manifold. More precisely we will need for the second step that
\begin{equation}
\label{eqDimVsCohdim}
(r-1)\dim(M)>r\cdot\cohdim(M),
\end{equation}
where $\cohdim(M)$ is the cohomology dimension of $M$.
This is done in \Section~\ref{secReductionLemma}.
\item In the second step, we can assume \eqref{eqDimVsCohdim} and prove \Theorem~\ref{thmMain} for maps $\simplex_N\to \wt M$ via the configuration space/test map scheme and Fadell--Husseini index theory, see \Sections~\ref{secCS/TM} and~\ref{secIndexOfM^}.
\end{compactitem}
In the second step we rely on the computation of the Fadell--Husseini index of joins of chessboard complexes that we obtained in \cite{BMZ09b}.

\subsection{A geometric reduction lemma} \label{secReductionLemma}

In the proof of Theorem~\ref{thmMain} may assume that $M$ satisfy the above inequality \eqref{eqDimVsCohdim} by using the following reduction lemma repeatedly.

\begin{lemma}
\Theorem~\ref{thmMain} for parameters $(d,r,M,f)$ can be derived from the case with parameters $(d',r',M',f')=(d+1,r,M\times I,f')$, where the continuous map $f'$ is defined in the following proof.
\end{lemma}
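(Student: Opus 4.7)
The plan is to enlarge $\simplex_N$ by $r-1$ additional vertices forming a single new color class (of size exactly $r-1$, so the coloring hypothesis of \Theorem~\ref{thmMain} is preserved), and to define $f'$ so that the extra $I$-factor acts as a height that penalises the use of these new vertices. Concretely, let $w_1,\dots,w_{r-1}$ be the new vertices, set $W=\{w_1,\dots,w_{r-1}\}$, and let $\simplex_{N'}$ be the enlarged simplex of dimension $N'=(d+2)(r-1)=N+(r-1)$. Pick any continuous extension $\wt f\colon\simplex_{N'}\to M$ of $f$ (for example, assign arbitrary values $\wt f(w_i)\in M$ and extend affinely over each new face). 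Let $h\colon\simplex_{N'}\to I$ be the affine function determined by $h(v)=0$ on the original vertices and $h(w_i)=1$ on the new ones, and set
\[
f':=(\wt f,h)\colon \simplex_{N'}\longrightarrow M\times I.
\]

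Applying the hypothesized case of \Theorem~\ref{thmMain} with parameters $(d+1,r,M\times I,f')$ produces $r$ disjoint rainbow faces $F_1,\dots,F_r$ of $\simplex_{N'}$ together with a common image point $(m,t)\in f'(F_1)\cap\cdots\cap f'(F_r)$. Since each $F_j$ contains at most one vertex of the new color class and $|W|=r-1<r$, pigeonhole forces some $F_{j_0}$ to avoid $W$ altogether; then $F_{j_0}\subseteq\simplex_N$, so $h(F_{j_0})=\{0\}$, and consequently $t=0$.

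Now that $t=0$ is pinned down, I would unpack: since $h$ is affine, for each $j$ the preimage $F_j\cap h^{-1}(0)$ is precisely the sub-face $F_j'$ of $F_j$ spanned by its original vertices. This sub-face is nonempty (otherwise $F_j\subseteq W$ would force $h(F_j)=\{1\}$, contradicting $t=0$); it is a rainbow face of $\simplex_N$ in the original coloring, since a sub-face of a rainbow face is rainbow and $F_j'$ contains no vertex of the new color; and the $F_j'$ are pairwise disjoint because the $F_j$ are. Finally, $m\in\wt f(F_j')=f(F_j')$ for every $j$, which is exactly the conclusion of \Theorem~\ref{thmMain} for $(d,r,M,f)$.

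I expect no serious obstacle here. The two bookkeeping points to verify are that the enlarged coloring still satisfies ``no color class has more than $r-1$ vertices'' (automatic, since the only new class has size exactly $r-1$) and that an intersection in $M\times I$ actually forces an intersection in $M$ (which is the role of the height function $h$ together with the pigeonhole step). The point of the lemma is that iteration realises $M$ as a factor of some $M\times I^g$: since $I$ is contractible, each application leaves $\cohdim(M)$ unchanged while raising $\dim(M)$ by one, so finitely many applications produce the inequality~\eqref{eqDimVsCohdim} required for the index-theoretic step that follows.
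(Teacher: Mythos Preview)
Your proof is correct and follows essentially the same approach as the paper: add $r-1$ new vertices in a fresh color class, map to $M\times I$ with the $I$-coordinate recording the total barycentric weight on the new vertices, then use pigeonhole to force the common point into $M\times\{0\}$ and restrict each $F_j$ to its old-vertex face. The only cosmetic difference is that the paper writes down one specific extension $\wt f$ (collapsing all new vertices onto $v_N$), whereas you allow an arbitrary one; your parenthetical ``extend affinely'' is a slight slip since $M$ need not carry an affine structure, but this is harmless because an extension exists for the trivial reason that $\simplex_N$ is a retract of $\simplex_{N'}$.
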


\begin{proof}
Suppose we have to prove the theorem for the parameters $(d,r,M,f)$.
Let $d'=d+1$, $r'=r$, and $M'=M\times I$.
Then $N':=(d'+1)(r-1)=N+r-1$.
Let $v_0,\ldots,v_N,v_{N+1},\ldots,v_{N'}$ denote the vertices of $\simplex_{N'}$.
We regard $\simplex_N$ as the front face of $\simplex_{N'}$ with vertices $v_0,\ldots,v_{N}$.
We give the new vertices $v_{N+1},\ldots,v_{N'}$ a new color.
Using barycentric coordinates, define a new map $f':\simplex_{N'}\to M'$ by
\[
\lambda_0 v_0+\ldots+\lambda_{N'}v_{N'}
\longmapsto
\left(f(\lambda_0v_0+\ldots+\lambda_{N-1}v_{N-1}+(\lambda_N+\ldots+\lambda_{N'})v_n),\lambda_{N+1}+\ldots+\lambda_{N'}\right).
\]

\noindent Suppose we can show \Theorem~\ref{thmMain} for the parameters $(d',r',M',f')$.
That is, we found a Tverberg partition $F'_1,\ldots,F'_r$ for these parameters.
Put $F_i:=F'_i\cap \simplex_N$.
Since $f'$ maps the front face $\simplex_N$ to $M\times\{0\}$ and since $\simplex_{N'}$ has only $r-1<r$ vertices more than $\simplex_N$, already the $F_i$ will intersect in $M\times\{0\}$.
Hence the $r$ faces $F_1,\ldots,F_r$ form a solution for the original parameters $(d,r,M,f)$.
This reduction is sketched in \Figure~\ref{figReductionLemma}.
\begin{figure}[tbh]
\centering
\input{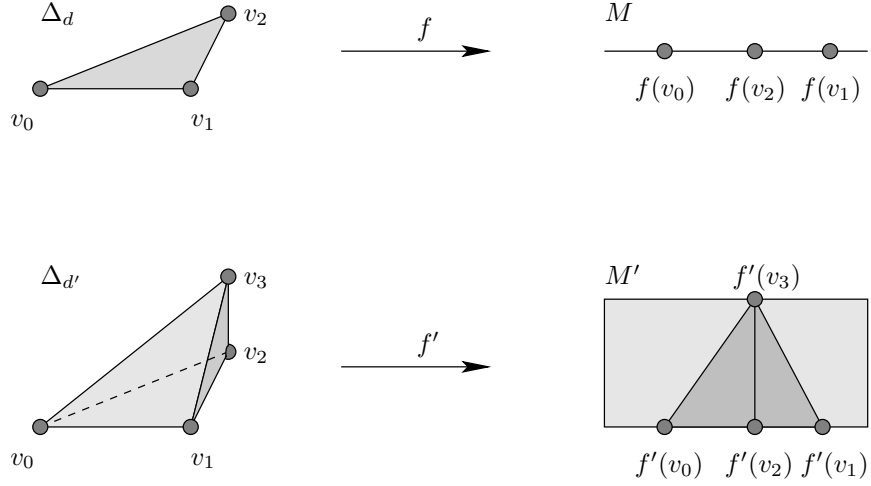}
\caption{Exemplary reduction in the case $d=1$, $r=2$, $N=2$.}
\label{figReductionLemma}
\end{figure}
\end{proof}

\noindent If the reduction lemma is applied $g=1+\big\lfloor{\frac{d}{r-1}}\big\rfloor$ times, the problem is reduced from the arbitrary parameters $(d,r,M,f)$ to parameters $(d'',r'',M'',f'')$ where $M''=M\times I^g$.
Thus $M''$ has vanishing cohomology in its $g$ top dimensions.
Therefore $(r-1)\dim(M'')>r\cdot\cohdim(M'')$.

\smallskip
Having this reduction in mind, in what follows we may simply assume that the manifold $M$ already satisfies inequality \eqref{eqDimVsCohdim}.

\subsection{The configuration space/test map scheme} \label{secCS/TM}

Suppose we are given a continuous map
\[
f:\simplex_N\To M,
\]
and a coloring of the vertex set $\vertices(\simplex_N)=[N+1]=C_0\uplus\dots\uplus C_m$
such that the color classes $C_i$ are of size $|C_i|\le r-1$.
We want to find a colored Tverberg partition, that is, pairwise disjoint rainbow faces $F_1,\dots,F_r$ of $\simplex_N$, $|F_j\cap C_i|\leq 1$, whose images under $f$ intersect.

The test map $F$ is constructed using~$f$ in the following way.
Let $f^{*r}:(\simplex_N)^{*r}\toG{\ZZ_r} M^{*r}$ be the $r$-fold join of~$f$.
Since we are interested in pairwise disjoint faces $F_1,\dots,F_r$, we restrict the domain of $f^{*r}$ to the $r$-fold $2$-wise deleted join of $\simplex_N$,
$(\simplex_N)^{*r}_{\Delta (2)} = [r]^{*(N+1)}$.
This is the subcomplex of $(\simplex_N)^{*r}$ consisting of all joins $F_1*\ldots*F_r$ of pairwise disjoint faces.
(See \cite[Chapter 5.5]{Mat07} for an introduction to these notions.)
Since we are interested in colored faces $F_j$, we restrict the domain further to the subcomplex
\[
R^{*r}_{\Delta(2)}=\left(C_0 *\dots * C_m\right)^{*r}_{\Delta(2)}=[r]^{*|C_0|}_{\Delta(2)}*\dots*[r]^{*|C_m|}_{\Delta(2)}.
\]
This is the subcomplex of $(\simplex_N)^{*r}$ consisting of all joins $F_1*\ldots*F_r$ of pairwise disjoint rainbow faces.
The space $[r]^{*k}_{\Delta(2)}$ is known as the \emph{chessboard complex} $\simplex_{r,k}$, \cite[p. 163]{Mat07}.
We write
\begin{equation}
\label{eqDefOfTestSpaceK}
K:=(\simplex_{r,|C_0|}) * \dots * (\simplex_{r,|C_m|}).
\end{equation}
Hence we get a \emph{test map}
\[
F': K \toG{\ZZ_r} M^{*r}.
\]
Let $T_{M^{*r}}:=\{\sum_{i=1}^{r} \frac{1}{r}\cdot x ~:~ x\in M\}$ be the thin diagonal of $M^{*r}$.
Its complement $M^{*r}\wo T_{M^{*r}}$ is called the topological $r$-fold $r$-wise deleted join of $M$ and it is denoted by $M^{*r}_{\Delta(r)}$.

The preimages $(F')^{-1}(T_{M^{*r}})$ of the thin diagonal correspond exactly to the colored Tverberg partitions.
Hence the image of $F'$ intersects the diagonal if and only if $f$ admits a colored Tverberg partition.

Suppose that $f$ admits no colored Tverberg partition, then the test map $F'$ induces a $\ZZ_r$-equivariant map
\begin{equation}
\label{eqTestmapF}
F: K \toG{\ZZ_r} M^{*r}_{\Delta(r)}.
\end{equation}
We will derive a contradiction to the existence of such an equivariant map using the Fadell--Husseini index theory.

\subsection{The Fadell--Husseini index}

In this section we review equivariant cohomology of $G$-spaces via the Borel construction. We refer the reader to \cite[Chap. V]{AM94} and \cite[Chap. III]{Die87} for more details.

Let in the following $H^*$ denote singular or \cech\ cohomology with $\FF_r$-coefficients, where $r$ is prime, and $G$ a finite group.
Let $EG$ be a contractible free $G$-CW complex, for example the infinite join $G*G*\ldots$, suitably topologized. The quotient $BG:=EG/G$ is called the \emph{classifying space of $G$}.
To every $G$-space $X$ we can associate the \emph{Borel construction} $EG\times_G X:=(EG\times X)/G$, which is the total space of the fibration $X\incl EG\times_G X\tof{pr_1} BG$.

\smallskip

The \textit{equivariant cohomology} of a $G$-space $X$ is defined as the ordinary cohomology of the Borel construction,
\[
H^*_G(X)\ :=\ H^*(EG\times_G X).
\]

\smallskip

If $X$ is a $G$-space, we define the \emph{cohomological index} of $X$, also called the
\emph{Fadell--Husseini index} \cite{FH87,FH88}, to be the kernel of the map in cohomology induced by the projection from $X$ to a point,
\[
\ind_G(X)\ :=\ \ker\big(H^*_G(\pt)\tof{p^*} H^*_G(X)\big)
           \ \subseteq\ H^*_G(\pt).
\]
The cohomological index is monotone in the sense that if there is a $G$-map $X\toG{G}Y$ then
\begin{equation}
\label{eqFHMonotonicity}
\ind_G(X)\supseteq\ind_G(Y).
\end{equation}

\smallskip

If $r$ is odd then the cohomology of~$\ZZ_r$ with $\FF_r$-coefficients as an $\FF_r$-algebra is
\[
H^*(\ZZ_r)=H^*(B\ZZ_r)\iso \FF_r[x,y]/ {(y^2)},
\]
where $\deg(x)=2$ and $\deg(y)=1$.
If $r$ is even, then $r=2$ and $H^*(\ZZ_r)\iso \FF_2[t]$, $\deg t=1$.

\smallskip

The index of the complex $K$ was computed in \cite[Corollary 2.6]{BMZ09b}:
\begin{theorem}
\label{thmIndexOfK}
$\ind_{\ZZ_r}(K) = H^{*\ge N+1}(B\ZZ_r)$.
\end{theorem}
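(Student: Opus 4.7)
The plan is to compute $\ind_{\ZZ_r}(K)$ by a sandwich estimate, using the dimension of $K$ for one containment and the connectivity of $K$ for the other. The key preliminary is that $K = \simplex_{r,|C_0|}*\dots*\simplex_{r,|C_m|}$ is a free $\ZZ_r$-CW complex: the cyclic group acts by simultaneous cyclic permutation of the $r$ rows in every chessboard factor, and since each $|C_i|\le r-1<r$, no nontrivial rotation can stabilize a nonempty simplex. From $\dim\simplex_{r,k}=k-1$ and the additivity $\dim(X*Y)=\dim X+\dim Y+1$ we get
\[
\dim K = \sum_{i=0}^{m}(|C_i|-1)+m = N.
\]

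For the inclusion $H^{*\ge N+1}(B\ZZ_r)\subseteq\ind_{\ZZ_r}(K)$, the free action makes the Borel construction collapse to the quotient: $H^*_{\ZZ_r}(K)\iso H^*(K/\ZZ_r)$. Since $K/\ZZ_r$ is $N$-dimensional, its cohomology vanishes above degree $N$, so the restriction $H^p(B\ZZ_r)\to H^p_{\ZZ_r}(K)$ is the zero map for every $p\ge N+1$, placing all those classes in the index.

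For the reverse inclusion I would appeal to connectivity. The join connectivity formula $\mathrm{conn}(X*Y)=\mathrm{conn}(X)+\mathrm{conn}(Y)+2$ gives $\mathrm{conn}(K)\ge N-1$ as soon as $\mathrm{conn}(\simplex_{r,|C_i|})\ge |C_i|-2$ for each $i$. Granted that, $K$ is an $(N-1)$-connected free $\ZZ_r$-space, so a $\ZZ_r$-equivariant classifying map $K\to E\ZZ_r$ is an $N$-equivalence; the induced map $K/\ZZ_r\to B\ZZ_r$ is then $N$-connected, so $H^p(B\ZZ_r)\to H^p_{\ZZ_r}(K)$ is injective for $p\le N$, and no class of degree $\le N$ can lie in the index.

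The main obstacle is securing the sharp $(k-2)$-connectivity of $\simplex_{r,k}$ in the regime $k$ close to $r-1$: the uniform Bj\"orner--Lov\'asz--\vrecica--\zivaljevic\ bound yields only $\mathrm{conn}(\simplex_{r,k})\ge\min\{r,k,\lfloor(r+k+1)/3\rfloor\}-2$, which falls short of $k-2$ whenever $k>(r+1)/2$. The way around this, carried out in \cite{BMZ09b}, is to replace the connectivity shortcut by a direct Leray--Serre analysis of the Borel fibration: using the K\"unneth formula for joins, one computes $\widetilde H^*(K)$ as a $\ZZ_r$-module, verifies that the bottom row of the spectral sequence can only be affected by a transgression from the top row, and shows that this transgression kills exactly the ideal $H^{*\ge N+1}(B\ZZ_r)$ while leaving the part of degree $\le N$ intact. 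Either formulation ultimately relies on sharp homological information about chessboard complexes beyond naive connectivity.
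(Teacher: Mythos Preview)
The paper does not prove this statement in the present article; it is quoted as \cite[Corollary~2.6]{BMZ09b}. Your proposal is consistent with that treatment: you give the easy inclusion via the dimension/free-action bound, you correctly diagnose that the connectivity shortcut fails (the BLVZ bound is sharp for $\simplex_{r,r-1}$, so $K$ need not be $(N-1)$-connected), and you then defer the hard inclusion to the Leray--Serre computation carried out in \cite{BMZ09b}---which is exactly what the paper does.
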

Therefore in the proof of \Theorem~\ref{thmMain} it remains to show that $\ind_{\ZZ_r}(M^{*r}_{\Delta(r)})$ contains a non-zero element in dimension less or equal to $N$.
Indeed, the monotonicity of the index \eqref{eqFHMonotonicity} implies the non-existence of a test map \eqref{eqTestmapF}, which in turn implies the existence of a colored Tverberg partition.

Let us remark that the index of $K$ becomes larger as an ideal than in Theorem~\ref{thmIndexOfK} if just one color class $C_i$ has more than $r-1$ elements.
That is, in this case our proof of Theorem~\ref{thmMain} does not work anymore.
In fact, for any $r$ and $d$ there exist $N+1$ colored points in $\RR^d$ such that one color class is of size $r$ and all other color classes are singletons that admit no colored Tverberg partition.

\subsection{The index of the deleted join of the manifold} \label{secIndexOfM^}

We have inclusions
\[
T_{M^{*r}} \ \lincl\ \left\{\sum\lambda_i x\in M^{*r} \st \lambda_i>0,\sum\lambda_i=1,x\in M\right\}\iso M\times\simplex_{r-1}^\circ \ \lincl\ M^{*r},
\]
where $\Delta_{r-1}^\circ$ denotes the open $(r-1)$-simplex.
Since $M$ is a smooth $\ZZ_r$-invariant manifold, $T_{M^{*r}}$ has a $\ZZ_r$-equivariant tubular neighborhood in $M^{*r}$; see \cite{Bre72}.
Its closure can be described as the disk bundle $D(\xi)$ of an equivariant vector bundle $\xi$ over $M$.
We denote its sphere bundle by $S(\xi)$.
The fiber $F$ of $\xi$ is as a $\ZZ_r$-representation the $(d+1)$-fold sum of $W_r$, where $W_r=\{x\in\RR[\ZZ_r]~:~ x_1+\ldots+x_r=0\}$ is the augmentation ideal of $\RR[\ZZ_r]$.

\smallskip

The representation sphere $S(F)$ is of dimension $N-1$.
It is a free $\ZZ_r$-space, hence its index is
\begin{equation}\label{eq:SphereIndex}
\ind_{\ZZ_r}(S(F)) = H^{*\ge N}(B\ZZ_r).
\end{equation}
This can be directly deduced from the Leray--Serre spectral sequence associated to the Borel construction $E\ZZ_r\times_{\ZZ_r} S(F)\to B\ZZ_r$, noting that the images of the differentials to the bottom row give precisely the index of $S(F)$, which can be seen from the edge-homomorphism.
For background on Leray--Serre spectral sequences we refer to \cite[Chapter 5, 6]{McC01}.

The Leray--Serre spectral sequence associated to the fibration $S(\xi)\to M$ collapses at $E_2$, since $N=(r-1)(d+1)\geq d+1$ and hence there is no differential between non-zero entries.
Thus the map $i^*:H^{N-1}(S(\xi))\to H^{N-1}(S(F))$ induced by inclusion is surjective.

\smallskip

The Mayer--Vietoris sequence associated to the triple $(D(\xi),M^{*r}_{\Delta(r)},M^{*r})$ contains the subsequence
\[
H^{N-1}(M^{*r}_{\Delta(r)})\oplus H^{N-1}(D(\xi))\ltof{j^*+k^*} H^{N-1}(S(\xi))\ltof{\delta} H^N(M^{*r}).
\]
We see that $H^N(M^{*r})$ is zero: This follows from the formula
\[
\widetilde{H}^{*+(r-1)}(M^{*r})\iso \left(\widetilde{H}^*(M)\right)^{\otimes r},
\]
as long as $N-(r-1)>re$, where $e$ is the cohomological dimension of $M$.
This inequality is equivalent to $(r-1)d>re$, which is our assumption \eqref{eqDimVsCohdim}.
Hence we can assume that $H^N(M^{*r})=0$.

\smallskip

Furthermore inequality \eqref{eqDimVsCohdim} implies that $N-1\geq d>\cohdim(M)$.
Hence the term $H^{N-1}(D(\xi))=H^{N-1}(M)$ of the sequence is zero as well.

Thus the map $j^*:H^{N-1}(M^{*r}_{\Delta(r)})\to H^{N-1}(S(\xi))$ is surjective.
Therefore the composition $(j\circ i)^*:H^{N-1}(M^{*r}_{\Delta(r)})\to H^{N-1}(S(F))$ is surjective as well.
We apply the Borel construction functor $E\ZZ_r\times_{\ZZ_r}(\_)\to B\ZZ_r$ to this map and apply Leray--Serre spectral sequences; see \Figure~\ref{figSSofSFandDJM}.

\smallskip

\begin{figure}[tbh]
\centering
\input{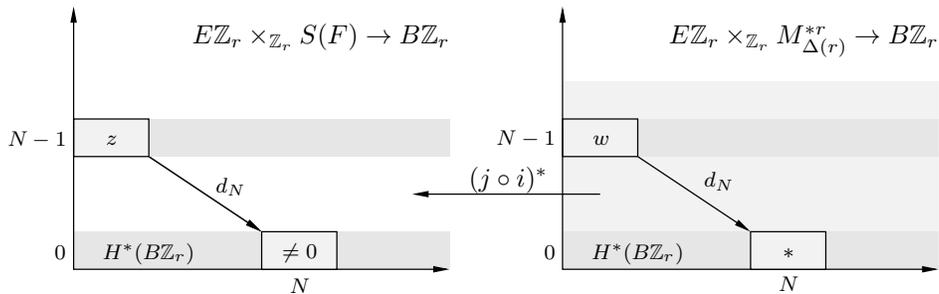}
\caption{We associate to the map $S(F)\tof{j\circ i}M^{*r}_{\Delta(r)}$ the Borel constructions and spectral sequences to deduce that $M^{*r}_{\Delta(r)}$ contains a non-zero element in dimension $N$.}
\label{figSSofSFandDJM}
\end{figure}

At the $E_2$-pages, the generator $z$ of $H^{N-1}(S(F))$ has a preimage $w$ since $(j\circ i)^*$ is surjective.
At the $E_N$-pages $(j\circ i)^*(d_N(w))=d_N(z)$, which is non-zero by \eqref{eq:SphereIndex}.
Hence $d_N(w)\neq 0$, which is an element in the kernel of the edge-homomorphism $H^*(B\ZZ_r)\to H^*_{\ZZ_r}(M^*_{\Delta(r)})$.

Therefore, the index of $M^{*r}_{\Delta(r)}$ contains a non-zero element in dimension $N$.
This completes the proof of \Theorem~\ref{thmMain}. \qed

\section{Remarks}

\subsection{Theorem \ref{thmMain} strictly generalizes Theorem \ref{thmMainForRd}} \label{secNonFactorizability}

One may ask whether Theorem~\ref{thmMain} can be reduced to Theorem~\ref{thmMainForRd} by factorizing the given map $f:\simplex_N\to M$ over $\RR^d$,
\[
f:\simplex_N\tof{f'}\RR^d\to M.
\]
In this case Theorem~\ref{thmMainForRd} immediately implies Theorem~\ref{thmMain}.
However this is not always possible.
\begin{proposition}
Let $f$ be the composed map $\simplex_3\to S^3\to S^2$ that first quotients out the boundary of $\simplex_3$ and then sends $S^3$ to $S^2$ via the Hopf map.
Then $f$ does not factor over $\RR^d$.
\end{proposition}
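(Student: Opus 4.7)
The plan is to argue by contradiction. Assume $f=g\circ f'$ for some continuous maps $f':\simplex_3\to\RR^2$ and $g:\RR^2\to S^2$, and set $B:=f'(\bd\simplex_3)\subset\RR^2$. Since $g(B)=f(\bd\simplex_3)=\{p\}$, both $f'$ and $g$ descend to the quotient, giving $\bar{f'}:S^3=\simplex_3/\bd\simplex_3\to\RR^2/B$ and $\bar g:\RR^2/B\to S^2$ whose composition is the Hopf map $\eta$. Because $\RR^2$ is contractible, the cofibration $B\hookrightarrow\RR^2$ yields a natural homotopy equivalence $\RR^2/B\simeq\Sigma B$.

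The crux is to recognize that $\bar{f'}\in\pi_3(\Sigma B)$ is a suspension class. For this I would invoke the long exact sequence of the pair $(\RR^2,B)$: as $\pi_2(\RR^2)=\pi_3(\RR^2)=0$, the boundary map gives an isomorphism $\partial:\pi_3(\RR^2,B)\xrightarrow{\cong}\pi_2(B)$, and under $\RR^2/B\simeq\Sigma B$ the natural map $\pi_3(\RR^2,B)\to\pi_3(\RR^2/B)$ is precisely the classical suspension homomorphism $E:\pi_2(B)\to\pi_3(\Sigma B)$. Setting $\alpha:=[f'|_{\bd\simplex_3}]\in\pi_2(B)$, one obtains $[\bar{f'}]=[\Sigma\alpha]$. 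Hence the mapping cone $C_{\bar{f'}}\simeq C_{\Sigma\alpha}=\Sigma C_\alpha$ is itself a suspension, so every cup product of positive-degree classes vanishes in $\wt H^*(C_{\bar{f'}};\ZZ)$.

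The contradiction now comes from comparing with $C_\eta=\mathbb{C}P^2$ via the map $G:C_{\bar{f'}}\to\mathbb{C}P^2$ that extends $\bar g$ by the identity on the attached $4$-cell. Alexander duality in $S^2$ forces $\check H^2(B)=0$, and $\check H^3(B)=0$ since $\dim B\le 2$; the long exact sequences of the pairs $(C_{\bar{f'}},\Sigma B)$ and $(\mathbb{C}P^2,S^2)$ then give $H^4(C_{\bar{f'}})\iso\ZZ\iso H^4(\mathbb{C}P^2)$, and because $G$ descends to the identity on the top-cell quotient $S^4$ the pullback $G^*$ is an isomorphism in degree $4$. Thus $G^*(v)\ne 0$ for the generator $v\in H^4(\mathbb{C}P^2)$; but the Hopf-invariant-one identity $u^2=v$ together with naturality gives $G^*(v)=(G^*u)^2$, which vanishes by the suspension property of $C_{\bar{f'}}$, a contradiction. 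The hard part is the suspension identification $[\bar{f'}]=[\Sigma\alpha]$; once that is in hand, the cohomological comparison is formal.
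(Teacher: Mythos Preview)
Your approach is genuinely different from the paper's. The paper (with a proof attributed to Vogt) gives a direct geometric argument: it picks one Hopf fibre $z=h^{-1}(n)$ and an open disc $D\subset S^3$ with $\partial D=z$ that $h$ maps homeomorphically onto $S^2\setminus\{n\}$; since $g\circ f'|_D=h|_D$ is injective, invariance of domain makes $f'|_D$ a homeomorphism onto a bounded open $D'\subset\RR^2$, and a limiting argument along Hopf fibres forces $f'(z)$ to be a single point while at the same time being the boundary of $D'$. Your route via mapping cones and the Hopf invariant is more algebraic and in principle more conceptual---it would immediately generalise to any map $\simplex_{n+1}\to S^n$ detected by a nontrivial cup square.

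There is, however, a real gap. You assert that $B\hookrightarrow\RR^2$ is a cofibration and hence $\RR^2/B\simeq\Sigma B$. But $B=f'(\partial\simplex_3)$ is only known to be a Peano continuum in the plane, and such sets need not be ANRs: the Hawaiian earring is a planar Peano continuum and is the continuous image of $S^2$ (Hahn--Mazurkiewicz plus a projection $S^2\to[0,1]$), yet it is not locally simply connected, hence not an ANR, and the inclusion into $\RR^2$ is then not a cofibration. Everything downstream---the identification of $[\bar{f'}]$ with a suspension class via $\pi_3(\RR^2,B)\to\pi_3(\RR^2/B)\cong\pi_3(\Sigma B)$, the conclusion $C_{\bar{f'}}\simeq\Sigma C_\alpha$, and the vanishing of cup products---rests on this identification. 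Related to this, your cohomology computations for $\Sigma B$ tacitly use singular cohomology while you invoke \v{C}ech cohomology for $B$; for non-ANR $B$ these may disagree.

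The idea is salvageable: replace the point-set quotient $\RR^2/B$ by the \emph{homotopy} cofibre $\RR^2\cup_\iota CB\simeq\Sigma B$ and replace $\bar{f'}$ by the map $\simplex_3\cup_{\partial\simplex_3}C(\partial\simplex_3)\to\RR^2\cup_B CB$ induced by $f'$ and $C(f'|_{\partial\simplex_3})$; under the obvious collapses the domain is $S^3$, the target is $\Sigma B$, and the resulting class is honestly $\Sigma\alpha$. One then has to check that the map to $S^2$ induced by $g$ still represents the Hopf class. As written, though, the proof is incomplete at the cofibration step.
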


The following proof is due to Elmar Vogt.

\begin{proof}
Suppose that $f$ factors as $f:\simplex_N\tof{f'}\RR^d\tof{g} M$.
Let $h:S^3\to S^2$ denote the Hopf map.
Let $z:=h^{-1}(n)\subset S^3$, where $n$ is the north pole of $S^2$.
We think of $z$ being the closure of the $z$-axis in the stereographic projection of $S^3$ to $\RR^3$ in the one-point compactification of $\RR^3$.
Let $D$ be the halfspace $\{(x,y,z)\st x>0,\ y=0\}$, which is a disc in $S^3$ whose boundary is $z$.
Then all fibers $h^{-1}(x)$ other than $z$ intersect $D$ transversally.
In particular, $h$ maps $D$ homeomorphically to $S^2\wo\{n\}$.
Then $f'$ also maps $D$ (regarded as a disc in $\simplex_3$) homeomorphically to a set $D'\subset \RR^2$.
Moreover, for every $x\neq n$, $f'(h^{-1}(x))$ is a singleton in $D'$.
Further, $D'$ is bounded, since $\simplex_3$ is compact.
Let $p\in z$ and let $(p_i)$ be a sequence in $D$ converging to $p$.
Then the fibers $h^{-1}(h(p_i))$ contain sequences of points that come close to any other point of $z$.
By continuity, $f'(z)$ must be a singleton in $\RR^2$ as well.
But $f'(z)$ must also be the boundary of $D'$ which is bounded and homeomorphic to an open disc, which gives a contradiction.
\end{proof}

\subsection{The case without color constraints} \label{secUncoloredCase}

Suppose we color the vertices of $\simplex_N$ in \Theorem~\ref{thmMain} with pairwise distinct colors.
Then all faces of $\simplex_N$ are rainbow faces, hence the condition of being a rainbow face is empty.
This case was already treated by Volovikov, in a slightly stronger version.

\begin{theorem}[Volovikov \cite{Vol96}]
\label{thmVolovokovsTopTverberg}
Let $d\geq 1$, let $r=p^k$ be a prime power, $N:=(d+1)(r-1)$, and $f:\bd\simplex_N\to M$ be a continuous map from the boundary $N$-simplex to a $d$-dimensional topological manifold.
If $r=2$ then we further assume that the degree of $f$ is even.
Then $\simplex_N$ has $r$ disjoint rainbow faces whose images under $f$ intersect.
\end{theorem}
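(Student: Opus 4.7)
The plan is to specialize the three-step argument behind \Theorem~\ref{thmMain} to the case in which every face is a rainbow face (all color classes are singletons) and to upgrade from $r$ prime to $r=p^k$ a prime power by replacing $\ZZ_r$ with the elementary abelian group $G:=(\ZZ_p)^k$ acting on $[r]$ via its regular representation (hence freely and transitively as a subgroup of $S_r$). Throughout the argument $\ZZ_r$-equivariant cohomology is simply replaced by $G$-equivariant cohomology, and the monotonicity of the Fadell--Husseini index \eqref{eqFHMonotonicity} will again be the source of the contradiction.

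First I would adapt the reduction lemma of \Section~\ref{secReductionLemma} to the boundary setting -- for instance by coning $f$ to $\tilde f:\simplex_N\to CM$, running the original reduction on $\tilde f$, and restricting back to the boundary -- so that I may assume the dimensional inequality \eqref{eqDimVsCohdim}. The configuration space then becomes
\[
K \ = \ (\bd\simplex_N)^{*r}_{\Delta(2)} \ \subseteq \ [r]^{*(N+1)}_{\Delta(2)}\ =\ \simplex_{r,N+1},
\]
namely the subcomplex of the chessboard complex obtained by deleting those top cells that consume all $N+1$ vertices of $\simplex_N$. This is a free $G$-complex and it is $(N-2)$-connected, so the standard edge-homomorphism argument in the Leray--Serre spectral sequence of $EG\times_G K\to BG$ gives $\ind_G(K)=H^{*\ge N}(BG)$.

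For the codomain I would transcribe the argument of \Section~\ref{secIndexOfM^} verbatim but $G$-equivariantly: the thin diagonal $T_{M^{*r}}$ carries a $G$-equivariant tubular neighborhood $D(\xi)$ whose fiber is the $(d+1)$-fold direct sum of the augmentation ideal $W\subset\RR[G]$ (pulled back along $G\hookrightarrow S_r$). Because $G$ acts freely on $S(W)$, the sphere bundle $S(F)$ is itself a free $G$-space with $\ind_G(S(F))=H^{*\ge N}(BG)$. The Mayer--Vietoris sequence together with the Leray--Serre transgression diagram of \Figure~\ref{figSSofSFandDJM} then produces a non-zero class in $\ind_G(M^{*r}_{\Delta(r)})$ in dimension $N$. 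Monotonicity of the index contradicts the existence of the $G$-equivariant test map $F$, so a configuration of $r$ pairwise disjoint faces of $\simplex_N$ whose $f$-images meet must exist.

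The main obstacle is the case $r=2$, where $N=d+1$ and $\bd\simplex_N\cong S^d$. In that regime the Mayer--Vietoris surjectivity of $(j\circ i)^*$ used in \Section~\ref{secIndexOfM^} is no longer automatic: the obstruction to lifting the generator of $H^{N-1}(S(F))$ through $H^{N-1}(M^{*r}_{\Delta(r)})$ is precisely the mod-$2$ degree of $f:S^d\to M$. An even degree kills this obstruction and lets the index of $M^{*r}_{\Delta(r)}$ receive the critical class, whereas an odd degree would leave the index too small for the contradiction to run -- which is exactly why the statement imposes ``$\deg f$ even'' when $r=2$.
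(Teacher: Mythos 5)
Your argument breaks down at its central step, and with your own numbers. You replace the configuration space by $K=(\bd\simplex_N)^{*r}_{\Delta(2)}$ and assert $\ind_G(K)=H^{*\ge N}(BG)$, while the codomain analysis that you transcribe from the paper produces a non-zero element of $\ind_G(M^{*r}_{\Delta(r)})$ in degree exactly $N$ and nothing below. Such an element is contained in $H^{*\ge N}(BG)$, so the inclusion $\ind_G(K)\supseteq\ind_G(M^{*r}_{\Delta(r)})$ forced by monotonicity \eqref{eqFHMonotonicity} is not violated, and no contradiction results: the scheme only works if the domain index begins in degree $N+1$. That is precisely what the deleted join of the \emph{whole} simplex, $[r]^{*(N+1)}$ (an $(N-1)$-connected free complex of dimension $N$), provides, and it is the route the paper actually takes in \Section~\ref{secUncoloredCase}; this is also why the paper only claims its method for maps defined on all of $\simplex_N$ and attributes the stated boundary version to Volovikov, whose proof rests on a Bourgin--Yang type theorem for $(\ZZ_p)^k$-actions \cite{Vol93} rather than on this index computation. (Two asides: the ambient complex is $[r]^{*(N+1)}$, not the chessboard complex $\simplex_{r,N+1}$, which has dimension only $r-1$; and your connectivity estimate is not sharp --- for $r\ge3$ the boundary deleted join can be checked to be $\FF_p$-acyclic through degree $N-1$, so a corrected bookkeeping could still run there, whereas for $r=2$ it genuinely has homology in degree $N-1$, which is exactly where the degree hypothesis would have to intervene. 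None of this is in your write-up.)

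Several supporting claims are also wrong as stated. For $k\ge2$ the group $G=(\ZZ_p)^k$ does \emph{not} act freely on $S(W)$ --- an elementary abelian group of rank at least $2$ never acts freely on a sphere --- so your equality $\ind_G(S(F))=H^{*\ge N}(BG)$ is unjustified; what survives, and what the paper uses, is only that the action is fixed point free and hence the transgression $d_N(z)$ (an Euler class) is a non-zero element of degree $N$. Your explanation of the $r=2$ hypothesis cannot be correct: the index of $M^{*2}_{\Delta(2)}$, and in particular the surjectivity of $(j\circ i)^*$ in the Mayer--Vietoris step, is computed from $M$ alone and never sees the map $f$, so ``the obstruction is the mod-$2$ degree of $f$'' has no meaning at that point of the argument; the degree can only enter through a finer analysis of the specific equivariant map, which is where Volovikov's different method is needed and for which you give no argument. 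Finally, the cone $CM$ is not a $(d+1)$-manifold unless $M$ is a sphere, so you cannot feed $\tilde f:\simplex_N\to CM$ into the reduction lemma, whose codomain step requires the equivariant tubular neighbourhood of the thin diagonal inside a manifold; the reduction to inequality \eqref{eqDimVsCohdim} for the boundary setting has to be reworked rather than obtained by coning.
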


Our proof given in this paper works also for prime powers $r=p^k$ in the
case without color constraints,
since then
\begin{compactitem}[$\bullet$]
\item the configuration space is the join
$[r]^{*(N+1)}$, which is $(N-1)$-connected and $(\ZZ_p)^k$-free,
hence its index is $H^{*\ge N+1}(B((\ZZ_p)^k))$, and
\item the group  $(\ZZ_p)^k$ acts fixed point freely on the sphere $S(F)$ and
$\ind_{(\ZZ_p)^k}(S(F))$ consequently contains an element of degree $N$, particularly
$d_N(z)$ in the notation of \Section~\ref{secIndexOfM^}.
\end{compactitem}

\subsection{Reduction to the subcomplex of rainbow faces} \label{secProofOfThmWithSmallDomain}

One could ask whether $\simplex_N$ in \Theorem~\ref{thmMain} can be replaced by
the subcomplex $R$ that consists of all rainbow faces.
The methods of this paper seem to establish this only if we assume that sufficiently many colors are used.
(The assumptions of \Theorem~\ref{thmMain} imply that the $N+1$ vertices of $\simplex_N$ are colored with at least $\big\lceil{\frac{N+1}{r-1}}\big\rceil=d+2$ colors.)

\begin{corollary}
\label{theoremMainWithSmallDomain2}%
Let $d\geq 1$, $r\geq 2$ prime, and $N:=(d+1)(r-1)$.
Let the vertices of $\simplex_N$ be colored with at least $d+3+\big\lfloor{\frac{d}{r-1}}\big\rfloor=d+2+g$ colors such that all color classes $C_i$ are of size $|C_i|\leq r-1$.
Let $R$ be the subcomplex of $\simplex_N$ consisting of all rainbow faces.
Let $f:R\to M$ be a continuous map from $R$ to a $d$-dimensional manifold $M$.
Then $R$ has $r$ disjoint faces whose images under $f$ intersect.
\end{corollary}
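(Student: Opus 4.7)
The plan is to emulate the two-part proof of \Theorem~\ref{thmMain}. The CS/TM argument of \Sections~\ref{secCS/TM}--\ref{secIndexOfM^} uses $f$ only through its restriction to the rainbow subcomplex, since the test space $K$ sits inside $R^{*r}_{\Delta(2)}$; so only the reduction step of \Section~\ref{secReductionLemma} requires genuine modification when the domain is shrunk from $\simplex_N$ to $R$. The task is therefore to produce, from $f:R\to M$, an extended map to which the reduction lemma (applied effectively $g$ times) can be invoked, and then to read the Tverberg partition back off.

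Concretely, I would enlarge the simplex by adjoining $g$ new color classes $C'_1,\dots,C'_g$ each of size $r-1$, obtaining $\simplex_{N'}$ with $N'=N+g(r-1)$ and rainbow subcomplex $R^{(g)}=R*C'_1*\cdots*C'_g$, and build an equivariant continuous extension
\[
\tilde f:R^{(g)}\To M\times I^g,\qquad \tilde f|_R=(f,0,\dots,0),
\]
whose $j$-th $I^g$-coordinate records the total barycentric weight on $C'_j$ (well defined on rainbow faces since at most one vertex of each color is active). Where $p\in R^{(g)}$ has positive total weight on old vertices, the $M$-component is $f$ applied to the normalized projection of $p$ to $\simplex_N$; that projection's support is a subset of a rainbow face of $\simplex_{N'}$, so it automatically lies in $R$. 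The subtle point is extending $\tilde f$ continuously across the ``apex'' $C'_1*\cdots*C'_g$; here the surplus of colors given by the hypothesis $m+1\ge d+2+g$ provides the combinatorial room for the construction, for instance by piling the weight coming from the $j$-th new class onto a ``safe'' old vertex whose color does not collide with any other active vertex of the rainbow face in question.

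With $\tilde f$ in place, the argument proceeds as in \Theorem~\ref{thmMain}. The test map $\tilde F:K^{(g)}\to(M\times I^g)^{*r}_{\Delta(r)}$ has $\ind_{\ZZ_r}(K^{(g)})=H^{*\ge N'+1}(B\ZZ_r)$ by \Theorem~\ref{thmIndexOfK}, while $M\times I^g$ satisfies \eqref{eqDimVsCohdim}, so the argument of \Section~\ref{secIndexOfM^} yields a nonzero class of degree $N'$ in $\ind_{\ZZ_r}((M\times I^g)^{*r}_{\Delta(r)})$. This contradicts the monotonicity of the index unless $\tilde f$ admits a Tverberg partition $F'_1,\dots,F'_r$. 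A pigeonhole identical to the one in the reduction lemma finishes the argument: each $C'_j$ has only $r-1<r$ vertices, so for every $j$ at least one of the $r$ disjoint faces avoids $C'_j$ entirely and the shared $j$-th $I$-coordinate must vanish; since this holds for all $j$, the common image point lies in $M\times\{0\}^g$ and is realized by points in $F'_i\cap\simplex_N\subseteq R$. These are $r$ pairwise disjoint rainbow faces of $R$ whose $f$-images coincide.

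The hard part is the construction of the continuous extension $\tilde f$ on all of $R^{(g)}$, particularly at points lying entirely on new vertices, where the naive formula of \Section~\ref{secReductionLemma} fails to be continuous because the ``normalized projection'' back to $R$ has no unique limit. The $g$ extra colors in the hypothesis are exactly what is needed to arrange a coherent choice of piling target at each of the $g$ reduction steps; presumably they are also essentially necessary for the method to work, which would explain the side condition in the statement of the corollary.
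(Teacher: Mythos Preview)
Your overall architecture matches the paper's: the CS/TM part of \Sections~\ref{secCS/TM}--\ref{secIndexOfM^} uses only $f|_R$, so the one thing to rework is the reduction to $M\times I^g$. The paper says exactly this. Where you diverge is in how the $g(r-1)$ new vertices are colored, and that difference is the source of the gap you yourself flag.

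You introduce $g$ \emph{new} color classes $C'_1,\dots,C'_g$ and take the $M$-coordinate to be $f$ of the drop-and-renormalize projection to $R$. As you note, this projection has no continuous extension over the apex $C'_1*\cdots*C'_g$. Your proposed remedy, piling the weight of a new vertex onto a ``safe'' old vertex chosen so that its color does not clash with the other active vertices of the current rainbow face, does not yield a well-defined continuous map: the safe target depends on the face, and adjacent rainbow faces force incompatible choices. Moreover, in your construction the extra $g$ old colors promised by the hypothesis are never actually used; so the explanation at the end, that ``the $g$ extra colors are exactly what is needed'', is not connected to the construction you wrote down.

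The paper's fix is to color the new vertices with \emph{old} colors: each of the $r-1$ vertices added at a given reduction step is placed ``above'' some chosen old vertex $v$ and is given the \emph{same} color as $v$. Then a rainbow face of the enlarged complex can never contain both a new vertex and another vertex (old or new) of that color, so projecting each new vertex to the old vertex beneath it sends every rainbow face to a rainbow face of $R$, and the map is defined and continuous everywhere, including over faces consisting entirely of new vertices. The hypothesis of at least $d+2+g$ color classes is what guarantees that at each of the $g$ reduction steps one can distribute the $r-1$ new vertices among old color classes while keeping every class of size at most $r-1$. Once you make this single change, your argument goes through as written.
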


The proof of \Corollary~\ref{theoremMainWithSmallDomain2} is analogous to that of \Theorem~\ref{thmMain}.
The main change occurs in the reduction to the case where the manifold is $M'=M\times I^g$, see \Section~\ref{secReductionLemma}.
Here one needs to be a bit more careful, because the reduction might not be possible.
Instead of letting $f$ send the $r-1$ new vertices of $\simplex_{N'}$ to points above $f(v_N)$ and giving them a new color, we send them above the images of possibly different vertices and color them with the same color as the vertex below.
This has to be done in such a way that all new color classes are still of size less than $r$.
This is possible since the number of used colors is at least $d+2+g$.

\medskip

Using more advanced machinery we can even prove \Theorem~\ref{thmMain} even for maps $f:R\to M$, without the constraint on the number of color classes from Corollary~\ref{theoremMainWithSmallDomain2}.
\begin{corollary}
\label{theoremMainWithSmallDomainFullResult}%
Let $d\geq 1$, $r\geq 2$ prime, and $N:=(d+1)(r-1)$.
Let the vertices of $\simplex_N$ be colored such that all color classes $C_i$ are of size $|C_i|\leq r-1$.
Let $R$ be the subcomplex of $\simplex_N$ consisting of all rainbow faces.
Let $f:R\to M$ be a continuous map from $R$ to a $d$-dimensional manifold $M$.
Then $R$ has $r$ disjoint faces whose images under $f$ intersect.
\end{corollary}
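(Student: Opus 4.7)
The obstacle to dropping the color-count hypothesis of \Corollary~\ref{theoremMainWithSmallDomain2} is that the reduction lemma of \Section~\ref{secReductionLemma} requires spare color capacity to add the $g(r-1)$ auxiliary vertices without violating $|C_i|\leq r-1$, so one cannot first reduce to the convenient case where $M$ satisfies \eqref{eqDimVsCohdim} and then run the argument of \Section~\ref{secIndexOfM^}. My plan is to avoid the reduction lemma entirely and argue about $\ind_{\ZZ_r}(M^{*r}_{\Delta(r)})$ directly.

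First I would observe that the CS/TM scheme of \Section~\ref{secCS/TM} goes through verbatim when $\simplex_N$ is replaced by $R$: the test space $K$ only records which rainbow faces of each color class are used, and the test map is built from the restriction of $f^{*r}$ to $K$, which only evaluates $f$ on rainbow faces. Assuming no colored Tverberg partition exists, one obtains an equivariant map $F : K \toG{\ZZ_r} M^{*r}_{\Delta(r)}$, so monotonicity of the Fadell--Husseini index combined with \Theorem~\ref{thmIndexOfK} yields
\[
\ind_{\ZZ_r}(M^{*r}_{\Delta(r)}) \ \subseteq\ H^{*\geq N+1}(B\ZZ_r).
\]
The corollary follows once this inclusion is contradicted, i.e.\ once one exhibits a non-zero class of degree at most $N$ in $\ind_{\ZZ_r}(M^{*r}_{\Delta(r)})$ for \emph{any} $d$-dimensional manifold $M$.

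To produce such a class, I would refine the analysis of \Section~\ref{secIndexOfM^}. The natural candidate comes from the equivariant Thom class $u \in H^N_{\ZZ_r}(M^{*r}, M^{*r}_{\Delta(r)})$ of the normal bundle $\xi \iso (TM \oplus \varepsilon^1) \otimes W_r$ of the thin diagonal $T \iso M$ inside $M^{*r}$ (identified via excision). Its image $\wt u \in H^N_{\ZZ_r}(M^{*r})$ in the long exact sequence of the pair vanishes in $H^N_{\ZZ_r}(M^{*r}_{\Delta(r)})$ by construction, and its restriction to the trivially-acting $T\iso M$ is the equivariant Euler class $e_{\ZZ_r}(\xi)$, whose leading term in $H^*(B\ZZ_r)\otimes H^*(M)$ is $e_W^{d+1}\cdot 1_M$ with $e_W:=e_{\ZZ_r}(W_r)\neq 0$. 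Tracing $\wt u$ through the Leray--Serre spectral sequence of $E\ZZ_r\times_{\ZZ_r}M^{*r}_{\Delta(r)}\to B\ZZ_r$ and using equivariant Poincar\'e--Lefschetz duality for $M$ should pin down a non-trivial $d_N$-differential landing in $H^N(B\ZZ_r)$, which is the required index element.

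The main obstacle is this last step: the argument of \Section~\ref{secIndexOfM^} invoked the vanishing of $H^{N-1}(M)$ and $H^N(M^{*r})$ precisely to avoid cancellations in the edge homomorphism, and without \eqref{eqDimVsCohdim} these groups are typically non-zero. Ruling out that the $e_W^{d+1}$-contribution to $e_{\ZZ_r}(\xi)$ gets cancelled by lower-degree correction terms coming from $H^{>0}(M)$ requires a multiplicative argument in equivariant cohomology rather than a vanishing one--identifying the whole class $e_{\ZZ_r}(\xi)$ as a polynomial in $e_W$ and the characteristic classes of $TM$ and showing that no pattern of cancellations can kill its leading term--and this is where the ``more advanced machinery'' alluded to by the authors has to enter.
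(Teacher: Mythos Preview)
Your approach stays with the deleted-join scheme and tries to sharpen the index computation of $M^{*r}_{\Delta(r)}$ directly; the paper takes a different route. Rather than refining \Section~\ref{secIndexOfM^}, it abandons the deleted-join scheme and passes to the deleted-product scheme, where the relevant test map is $f^\times : R^r_{\Delta(2)} \toG{\ZZ_r} M^r_{\Delta(r)}$. The index of the domain is computed as $\ind_{\ZZ_r}(R^r_{\Delta(2)}) = H^{*\ge N-r+2}(B\ZZ_r)$ via a reduction lemma of Karasev \cite[Lemma~3.2]{Kar09} (independently Schultz) that relates it to the already-known join index of \Theorem~\ref{thmIndexOfK}. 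On the target side the paper then invokes Volovikov's method from \cite{Vol96}, which rests on \cite[Theorem~1]{Vol93}; that Bourgin--Yang type machinery handles $M^r_{\Delta(r)}$ for an arbitrary $d$-manifold without any analogue of \eqref{eqDimVsCohdim}. So the ``more advanced machinery'' is not an equivariant Euler-class refinement within the join picture, but the product scheme together with the Karasev lemma and Volovikov's theorem.

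Your own proposal has the gap you yourself flag: you do not actually establish that the leading $e_W^{d+1}$ contribution to $e_{\ZZ_r}(\xi)$ survives to give a nonzero element of $\ind_{\ZZ_r}(M^{*r}_{\Delta(r)})$ in degree~$N$ once $H^{>0}(M)$ is present. Even granting the identification of $e_{\ZZ_r}(\xi)$ as a polynomial in $e_W$ and the characteristic classes of $TM$, you offer no mechanism to rule out cancellations across columns of the spectral sequence, and the final sentence defers precisely this point. Whether or not such an argument can be pushed through, the paper sidesteps the difficulty entirely by changing schemes rather than by controlling those correction terms.
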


To prove this, we use the deleted-product scheme.
The deleted-join scheme amounts to show that the test map~\eqref{eqTestmapF} does not exist.
In the corresponding deleted-product scheme one instead has to show that the test map
\begin{equation}
\label{eqTestmapFProduct}
f^\times: R^r_{\Delta(2)}\to M^r_{\Delta(r)}
\end{equation}
does not exist.
For this to do, we calculate the index of $R^r_{\Delta(2)}$,
\[
\ind_{\ZZ_r}(R^r_{\Delta(2)}) = H^{*\ge N-r+2}(B\ZZ_r).
\]
This follows from Theorem~\ref{thmIndexOfK}, which gives the index of the corresponding deleted join $R^{*r}_{\Delta(2)}$, and a reduction lemma due to Karasev \cite[Lemma 3.2]{Kar09} and independently Carsten Schultz (unpublished).
Then one can proceed exactly as in the proof of Volovikov's Theorem~\ref{thmVolovokovsTopTverberg}, see~\cite{Vol96}, which is based on Theorem 1 of his paper~\cite{Vol93}.

\subsection{Deleted joins versus deleted products} \label{secDelJoinsVsDelProducts}

An interesting question is which test map scheme is more powerful, the one coming from the deleted-product construction or the one from the deleted-join construction?

Let us first define general deleted products and deleted joins for a \emph{simplicial complex} $X$.
We define the \emph{$r$-fold $\ell$-wise deleted product} $X^r_{\Delta(\ell)}$ of the simplicial complex $X$ to be the subcomplex of
the cell complex $X^r$ containing only cells $F_1\times\ldots\times F_r$ such that the $F_i$ are $\ell$-wise disjoint, that is, no $\ell$ of them
have a point in common.
Analogously, we define the \emph{$r$-fold $\ell$-wise deleted join} $X^{*r}_{\Delta(\ell)}$ of the simplicial complex $X$ as the subcomplex of
the simplicial complex $X^{*r}$ that contains only those faces $F_1*\ldots*F_r$ such that the $F_i$ are $\ell$-wise disjoint.
Compare with \cite[Definition 6.3.1]{Mat07}.

Now we introduce general deleted products and deleted joins for a \emph{topological space} $Y$.
The \emph{$r$-fold $\ell$-wise deleted product} of the space $Y$ is
\[
Y^r_{\Delta(\ell)} := \{(y_1,\ldots,y_r)\in Y^r\st \textnormal{no $\ell$ of the $y_i$ are equal}\},
\]
while the \emph{$r$-fold $\ell$-wise deleted join} of $Y$ we define as
\[
Y^{*r}_{\Delta(\ell)} := \{\lambda_1y_1+\ldots+\lambda_r y_r \in Y^{*r}\st \textnormal{if $\lambda_1=\ldots=\lambda_r=\tfrac{1}{r}$ then no $\ell$ of the $y_i$ are equal}\}.
\]

In many applications we investigate the existence of a $\Sigma_r$-equivariant test map of deleted products
\begin{equation}
\label{eqGeneralTestMapDeletedProduct}
f^\times: X^r_{\Delta(\ell)} \toG{\Sigma_r} Y^r_{\Delta(k)},
\end{equation}
where $X$ is a simplicial complex and $Y$ is a space.
Here $\Sigma_r$ stands for the group of permutations on $r$ letters.
The corresponding $\Sigma_r$-equivariant test map for deleted joins would be
\begin{equation}
\label{eqGeneralTestMapDeletedJoin}
f^*: X^{*r}_{\Delta(\ell)} \toG{\Sigma_r} Y^{*r}_{\Delta(k)}.
\end{equation}

\noindent In the case when $Y=\RR^d$, for some $d$, the existence of the $\Sigma_r$-equivariant map
$f^\times$ implies the existence of the $\Sigma_r$-equivariant map $f^*$.
Indeed, if $f^\times: X^r_{\Delta(\ell)} \toG{\Sigma_r} (\RR^d)^r_{\Delta(k)}$ is given, then we can define
\[
f^*(\lambda_1 x_1+\ldots+\lambda_r x_r):=\sum_{i=1}^r\lambda_i y_i,
\]
where
\[
y_i := \left(\prod_{j=1}^rr\lambda_j\right)\cdot f_i^\times(x_1,\ldots,x_r)\in \RR^d.
\]
The constant factor $r^r$ is included in the definition of $y_i$ because $X^r_{\Delta(\ell)}$ can be seen as a subspace of $X^{*r}_{\Delta(\ell)}$ where all join coefficients are $\tfrac{1}{r}$. The cell $F_1\times\ldots\times F_r$ of the deleted product complex can be identified with the subspace
$\{\tfrac{1}{r}\cdot x_1+\ldots +\tfrac{1}{r}\cdot x_r~|~x_i\in F_i\}$
of the simplex $F_1*\ldots * F_r$ of the deleted join complex.

\noindent Therefore, in the case when $Y=\RR^d$, the deleted-product scheme \eqref{eqGeneralTestMapDeletedProduct} is at least as strong as the deleted-join scheme \eqref{eqGeneralTestMapDeletedJoin}.

Nevertheless, \emph{proving} the non-existence of $f^*$ might be easier than proving the non-existence of~$f^\times$.
For instance, if one only wants to argue with the high connectivity of the domain, then this is usually easier for $f^*$, see e.g.~\cite[Sections 5.5--5.8]{Mat07}.

Also the monotonicity of the Fadell--Husseini index sometimes puts a stronger condition on $f^*$ than on $f^\times$.
In particular, this affects Theorem~\ref{thmMain}. The range of $f^\times$ is $M^r_{\Delta(r)}$.
If $M=\RR^d$ then the corresponding index is
\[
\ind_{\ZZ_r}((\RR^d)^r_{\Delta(r)}) = H^{*\ge d(r-1)}(B\ZZ_r),
\]
since $(\RR^d)^r_{\Delta(r)}$ deformation retracts equivariantly to a fixed-point free sphere whose dimension is \mbox{$d(r-1)-1$}.
Hence we can show the non-existence of $f^\times$ using the monotonicity of the index.
However, for $M=S^d$ the index is smaller with respect to inclusion by the following proposition, so
the monotonicity of the index alone is not enough to prove Theorem~\ref{thmMain} in the deleted-product scheme.
\begin{proposition}
If $d\geq 2$, then
\[
\ind_{\ZZ_r}((S^d)^r_{\Delta(r)}) \subseteq H^{*\ge d(r-1)+1}(B\ZZ_r).
\]
\end{proposition}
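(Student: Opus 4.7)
The plan is to bound $\ind_{\ZZ_r}((S^d)^r_{\Delta(r)})$ via the $\ZZ_r$-equivariant long exact sequence of the pair $((S^d)^r, (S^d)^r_{\Delta(r)})$ together with an equivariant Thom isomorphism---essentially the same scheme as in \Section~\ref{secIndexOfM^}, but now for deleted products instead of deleted joins. The thin diagonal $T \iso S^d$ is the $\ZZ_r$-fixed set in $(S^d)^r$, and it admits an equivariant tubular neighborhood whose normal bundle $\nu \iso W_r \otimes TS^d$ has rank $d(r-1)$. Since $d\geq 2$, $T$ is simply connected and $\nu$ is orientable, so the equivariant Thom isomorphism yields
\[
H^k_{\ZZ_r}\bigl((S^d)^r, (S^d)^r_{\Delta(r)}\bigr) \iso H^{k-d(r-1)}_{\ZZ_r}(T) \iso H^{k-d(r-1)}(B\ZZ_r)\otimes H^*(S^d),
\]
which vanishes for $k<d(r-1)$ and equals $\FF_r$ for $k=d(r-1)$.

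Next, I would use that $T$ provides a $\ZZ_r$-fixed point, giving the Borel fibration $(S^d)^r \incl E\ZZ_r\times_{\ZZ_r} (S^d)^r\to B\ZZ_r$ an equivariant section. This forces $p^*\colon H^*(B\ZZ_r)\to H^*_{\ZZ_r}((S^d)^r)$ to be injective, i.e.\ $\ind_{\ZZ_r}((S^d)^r)=0$. Plugging this into the equivariant LES
\[
\cdots\to H^k_{\ZZ_r}\bigl((S^d)^r, (S^d)^r_{\Delta(r)}\bigr) \tof{\phi} H^k_{\ZZ_r}\bigl((S^d)^r\bigr)\tof{j^*} H^k_{\ZZ_r}\bigl((S^d)^r_{\Delta(r)}\bigr)\to\cdots,
\]
one sees that for $k<d(r-1)$ the map $\phi$ has zero source, so $j^*\circ p^*$ is injective and the index vanishes in this range.

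The interesting case is $k=d(r-1)$, where $\mathrm{image}(\phi)$ is spanned by the equivariant Poincar\'e dual $u:=\phi(\tau)$ of $T$, for $\tau$ the equivariant Thom class. To finish, I need to show $u\notin \mathrm{image}(p^*)$, which I would do by restricting to the fiber $(S^d)^r$ of the Borel fibration: any class in $\mathrm{image}(p^*)$ of positive degree restricts to $0\in H^{d(r-1)}((S^d)^r)$, whereas $u$ restricts to the ordinary Poincar\'e dual $[T]^\vee$. The latter is nonzero because the diagonal projects with degree one onto each factor, so $[T]=\sum_i [S^d]_i\neq 0$ in $H_d((S^d)^r;\FF_r)$. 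Therefore $u\notin\mathrm{image}(p^*)$, and $\ind_{\ZZ_r}((S^d)^r_{\Delta(r)})\cap H^{d(r-1)}(B\ZZ_r)=0$, yielding the stated containment $\ind_{\ZZ_r}((S^d)^r_{\Delta(r)}) \subseteq H^{*\ge d(r-1)+1}(B\ZZ_r)$.

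The main technical hurdle will be justifying the equivariant Thom isomorphism for $\nu$ with $\FF_r$-coefficients and identifying $\phi(\tau)$ with the equivariant Poincar\'e dual of $T$; the hypothesis $d\geq 2$ is used precisely to make $T$ simply connected (hence $\nu$ orientable) and to ensure that $[T]^\vee\neq 0$ after fiber restriction. Once these pieces are in place, the argument is routine bookkeeping that mirrors the Mayer--Vietoris/Thom manipulation of \Section~\ref{secIndexOfM^}, with the submanifold $T\subset (S^d)^r$ replacing the tubular neighborhood of the thin diagonal in the join.
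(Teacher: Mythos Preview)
Your argument is correct and follows a genuinely different route from the paper's. The paper works directly with the Leray--Serre spectral sequence of the Borel fibration $E\ZZ_r\times_{\ZZ_r}(S^d)^r_{\Delta(r)}\to B\ZZ_r$: via Poincar\'e--Lefschetz duality $H^i((S^d)^r\wo\Delta)\cong H^{dr-i}((S^d)^r,\Delta)$ and the long exact sequence of the pair $((S^d)^r,\Delta)$, it identifies the $\ZZ_r$-module structure on the fiber cohomology and observes that for $0<q<d(r-1)$ the coefficient module is free over $\FF_r[\ZZ_r]$, so $E_2^{p,q}=0$ for $p>0$ in that range; from this it concludes that no differential hits the bottom row before column $d(r-1)+1$. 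Your approach instead packages the same geometry into the equivariant long exact sequence of the pair $\bigl((S^d)^r,(S^d)^r_{\Delta(r)}\bigr)$ combined with the equivariant Thom isomorphism for the normal bundle of the thin diagonal, and then handles the critical degree $d(r-1)$ by restricting the equivariant Poincar\'e dual of $T$ to the fiber and checking it is nonzero there. Your argument is more geometric and makes the role of the critical degree transparent via the Thom class, while the paper's spectral-sequence computation trades this for explicit knowledge of the $\ZZ_r$-module structure of $H^*((S^d)^r\wo\Delta)$. One small remark: your justification of $d\ge2$ via simple connectivity of $T$ is not quite the point, since $\nu\cong(TS^d)^{\oplus(r-1)}$ is orientable for all $d\ge1$ and the $\ZZ_r$-action preserves orientation (trivially for $r=2$ with $\FF_2$-coefficients, and because the cyclic shift on $W_r$ has determinant $+1$ for $r$ odd); the hypothesis is rather what keeps the Thom-degree $d(r-1)$ strictly positive and the K\"unneth bookkeeping for $H_*((S^d)^r)$ clean.
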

\begin{proof}
We have to show that in the Leray--Serre spectral sequence associated to $EG\times_G(S^d)^r_{\Delta(r)}\to BG$, no non-zero differential hits the bottom row in filtration degree smaller or equal to $d(r-1)$. The $E_2$ entries are $H^*(BG,H^*((S^d)^r\wo\Delta))$, where $\Delta$ is the thin diagonal in $(S^d)^r$. Now,
\[
H^i((S^d)^r\wo\Delta)\iso H_{dr-i}((S^d)^r,\Delta)\iso H^{dr-i}((S^d)^r,\Delta).
\]
From the long exact sequence in cohomology of the pair $((S^d)^r,\Delta)$,
\[
\ldots\to H^*((S^d)^r,\Delta)\to H^*((S^d)^r) \to H^*(\Delta)\to\ldots,
\]
we see that $H^{dr}((S^d)^r,\Delta)=\FF_r$, $H^d((S^d)^r,\Delta)\iso \FF_r[\ZZ_r]/(1+t+\ldots+t^r)\FF_r$, and for $d<j<dr$ we have $H^j((S^d)^r,\Delta)\iso\FF_r[\ZZ_r]^{\oplus\alpha_j}$, where $\alpha_j\geq 0$ depends on $j$.
Now, $H^i(BG;\FF_p[\ZZ_p])$ is zero for $i\geq 1$.
Therefore the first non-zero row (up to the $0$-column entries) in the spectral sequence above the bottom row is the $d(r-1)$-row.
Thus the first element in the bottom row that is hit by a differential has degree at least $d(r-1)+1$.
\end{proof}

On the other hand, the monotonicity of the Fadell--Husseini index proves the non-existence of $f^*$ for $M=S^d$, since $(S^d)^{*r}_{\Delta(r)}$ deformation retracts equivariantly to an $(N-1)$-dimensional fixed-point free sphere, whose index is equal to $H^{*\ge N}(B\ZZ_r)$.

\noindent So, in this context, the deleted-join scheme is stronger than the deleted-product scheme.

\smallskip

\noindent\textbf{Acknowledgements.}
We are grateful to the referee for critical comments that improved differen aspects of the paper, and to Aleksandra, Julia, and Torsten for constant
support.


\end{document}